\newtheorem{theorem}{Theorem}
\newtheorem*{theorem*}{Theorem}
\newtheorem{lemma}[theorem]{Lemma}
\newtheorem*{claim*}{Claim}
\newcommand{\Q}{\mathbb{Q}}
\newcommand{\R}{\mathbb{R}}
\newcommand{\N}{\mathbb{N}}
\newcommand{\e}{\mathrm{e}}
\newcommand{\ie}{{\it{i.$\,$e.\ }}}
\title{The smallest regulator  for number fields of  degree 7 with five real places}
\author{Eduardo Friedman and Gabriel Ramirez-Raposo}
\address{Departamento de Matem\'atica, Facultad de Ciencias, Universidad de Chile, \newline
Casilla 653, Santiago, Chile.} 
\address{Facultad de Matem\'aticas, Pontificia Universidad Cat\'olica de Chile,  \newline Vicu\~na Mackenna 4860, Macul, Santiago, Chile}
 \email{friedman@uchile.cl} \email{gcramirez@uc.cl}
\begin{document}

\thanks{Partially supported by   Chilean FONDECYT grant 1170176.}
\begin{abstract}  In 2016 Astudillo, Diaz y Diaz and Friedman published sharp  lower bounds for regulators  of number fields of  all signatures up to degree seven, except for fields of degree seven having  five real places.
We deal with this signature, proving that  the field with  the first discriminant   has  minimal regulator.  The new element in the proof is an extension of  Pohst's geometric method  from the totally real case to fields having  one complex place.
\end{abstract}

\maketitle

\section{Introduction}
Some thirty years ago, the number fields with smallest discriminant for  signatures up to degree seven  were all known  \cite{Od}. Recently \cite{ADF} the same was established for regulators, except that  no sharp lower bounds were proved  for one signature in degree   seven. Here we close that gap.

\begin{theorem*}
Let $k$ be a  number field  of degree seven having  five real embeddings. Then its regulator $R_k$ satisfies $R_k\ge R_{k_1}= 2.8846\ldots$, where $k_1$ is the unique field of discriminant $-2\,306\,599$ in this signature.

More precisely, except for the three unique fields  with discriminants $-2\,306\,599,$ $ -2\,369\,207$ and $-2\,616\,839$, in this signature all fields satisfy $R_k>3.2$.
\end{theorem*}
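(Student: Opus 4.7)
The natural strategy is a two-step attack. First, an analytic lower bound for $R_k$ in terms of $|d_k|$ should reduce the theorem to a finite list of small-discriminant fields. Second, the remaining fields should be handled by an explicit search for potential small units, using the extension of Pohst's geometric method hinted at in the abstract.

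The first step would invoke an analytic inequality of the type developed in \cite{ADF}, giving an effective lower bound $R_k \ge \Phi(|d_k|)$ valid for all degree-$7$ fields of signature $(5,1)$. Solving $\Phi(D) > 3.2$ yields an explicit threshold $D_0$, so that any such field with $|d_k| \ge D_0$ already satisfies $R_k > 3.2$. The list of degree-$7$ signature-$(5,1)$ fields with $|d_k| < D_0$ is finite and, thanks to enumerations available in the literature (cf.\ \cite{Od}), completely tabulated; for each of them the regulator can be read off or directly computed and compared with the three claimed exceptional values and with $3.2$.

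The technical heart of the argument is the passage from a hypothetical small regulator to the existence of a small unit, which is where Pohst's geometric method must be extended. In the totally real case, Pohst showed that a small $R_k$ forces the existence of a non-torsion unit $\varepsilon$ whose $T_2$-norm $T_2(\varepsilon) = \sum_{\sigma}|\sigma(\varepsilon)|^2$ is explicitly bounded; one then bounds the elementary symmetric functions of the conjugates of $\varepsilon$ via Newton's identities and enumerates the finitely many possible minimal polynomials. In signature $(5,1)$ the same scheme applies in principle, but the logarithmic embedding now has one coordinate of weight $2$ (the complex place): with the five real embeddings $\sigma_1,\dots,\sigma_5$ and the complex embedding $\sigma_6$, one has $T_2(\varepsilon) = \sum_{i=1}^{5}|\sigma_i(\varepsilon)|^2 + 2|\sigma_6(\varepsilon)|^2$ together with the norm relation $\prod_{i=1}^{5}\sigma_i(\varepsilon)\cdot|\sigma_6(\varepsilon)|^2 = \pm 1$. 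The coefficient bounds extracted from these mixed constraints—and hence the size of the subsequent enumeration—genuinely differ from the totally real case, and obtaining sharp enough bounds to keep the search feasible is the main obstacle. Once that is done, identifying the three exceptional discriminants and proving $R_k > 3.2$ for the rest of the finite list should reduce to a large but bounded verification.
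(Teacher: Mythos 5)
There is a genuine gap, and it sits exactly where the paper's new contribution lies. Your first step presupposes an analytic bound of the form $R_k\ge\Phi(|d_k|)$ with $\Phi(D)>3.2$ for all $|d_k|\ge D_0$, so that large discriminants are dismissed analytically and only a finite tabulated list remains. No such bound is available here: the analytic tool of \cite{ADF} (Lemma \ref{Analy}) gives a lower bound for $R_k$ only on a \emph{bounded} discriminant window, since it requires an a priori cap $d_3\ge|D_k|$ with $g(4/d_3)\ge0$, and this positivity fails once $d_3$ is too large. This is precisely why signature $(5,1)$ was left open in \cite{ADF}: at unit rank $5$ the range covered by the analytic bounds no longer meets the range covered by Remak's coarse geometric bound. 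So in your scheme the fields of large discriminant and (hypothetically) small regulator are never ruled out; the main difficulty has been assumed away.

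Relatedly, you place the extension of Pohst's method on the wrong side of the argument and describe a different mechanism than the one that is needed. In the paper, assuming $R_k\le3.2$ one extracts a unit $\varepsilon$ with $k=\Q(\varepsilon)$ and $m_k(\varepsilon)<1.85847$, and feeds it into Remak's inequality $\log|D_k|\le m_k(\varepsilon)A(k)+\log P_7$ (Lemma \ref{Rem}); the new work is the sharpening of the bound on $P_7=\prod_{i<j}|1-\varepsilon_i/\varepsilon_j|^2$ from Remak's $7^7$ to $\e^{12}$ when five conjugates are real and two are complex conjugate (Lemmas \ref{BThetaLemma}--\ref{P7bound}). This yields $\log|D_k|<31.492$, which is exactly small enough that $g(4/\e^{31.492})>0$, so Lemma \ref{Analy} then excludes $3\,030\,000\le|D_k|\le\e^{31.492}$; below $3\,030\,000$ the fields are completely enumerated in \cite{DyD} (not \cite{Od}) and their regulators computed directly, which is where the three exceptional fields appear. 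Thus the geometric (Pohst-type) input handles the large-discriminant side, not the finite list, and no $T_2$/Newton-identity enumeration of minimal polynomials is needed anywhere: for the tabulated fields it is superfluous, and as a substitute for the missing large-discriminant argument it would require a search over all degree-$7$ integer polynomials whose roots are only bounded by about $\e^{1.36}$ at the real places, with no argument given that this is feasible or that it closes the gap.
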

The idea in \cite{ADF} is to first use  analytic lower bounds for regulators. These are very good up to a certain value $D_{\text{anal}}(r_1,r_2)$ of the discriminant $D_k$, where $(r_1,r_2)$ is the number of (real, complex) places of $k$. Then coarse geometric bounds due to Remak \cite{Re} are used for $|D_k|\ge  D_{\text{geom}}(r_1,r_2)$. This method  works if $D_{\text{geom}}(r_1,r_2)\le D_{\text{anal}}(r_1,r_2)$, which holds for small degrees, but  fails when the  unit-rank reaches 5.

In fact,  unit-rank 5, 6 and 7 are handled in \cite{ADF}, but only for  totally real fields, where an improved inequality due to Pohst \cite{Po} is available. To deal with signature  $(5,1)$, we extend Pohst's method, allowing one of the variables to be complex.

\section{Proof} 
\noindent If  $\varepsilon $ is a unit in $k$, let
\begin{equation}\label{Mk}
m_k(\varepsilon):=\sum_{\omega} (\log\|\varepsilon\|_\omega)^2,
\end{equation}
where $\omega$ runs over  the set of archimedean places of $k$ and  $\|\!\cdot\!\|_\omega$ denotes the  corres\-ponding absolute value, normalized so that   $|\text{Norm}_{k \slash \mathbb{Q}}(a)|=\prod_{\omega \in \infty_k}\|a\|_\omega$.
A proof of the following inequality can be found in \cite[\S6]{Re} or \cite[Lemma 3.4]{Fr}.
\begin{lemma}\label{Rem}
(\textit{Remak}) Suppose $k=\Q(\varepsilon)$,   where $\varepsilon \in k$ is a unit. Then the discriminant $D_k$ satisfies
$$\log|D_k|\leq m_k(\varepsilon)A(k)+\log P_n,$$
where
$$A(k):=\sqrt{ (n^3-n-4r_2^3-2r_2)/3} , \qquad  P_n=P_n(\varepsilon_1,...,\varepsilon_n):=\prod_{1\leq i<j\leq n}\!\Big|1-\frac{\varepsilon_i}{\varepsilon_j}\Big|^2 ,$$
$n:=[k:\Q]$,  $r_2$ is the number of complex places of $k$, and the $\varepsilon_i$ are the conjugates of $\varepsilon$ arranged so that $|\varepsilon_1|\leq |\varepsilon_2|\leq\dots \leq |\varepsilon_n|$.
\end{lemma}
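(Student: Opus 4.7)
The plan is to bound $|D_k|$ by the discriminant of the order $\Z[\varepsilon]$, factor out the $P_n$ contribution, and control the remainder with a Cauchy--Schwarz argument that uses the unit relation and the complex conjugate pairing.

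First I would use that $\varepsilon$ is a unit generating $k$, so $\Z[\varepsilon]\subseteq O_k$ and
$$
|D_k|\;\le\;|\operatorname{disc}(\Z[\varepsilon])|\;=\;\prod_{i<j}|\varepsilon_i-\varepsilon_j|^2.
$$
Since the conjugates are sorted by absolute value, for $i<j$ one has $|\varepsilon_i-\varepsilon_j|^2=|\varepsilon_j|^2|1-\varepsilon_i/\varepsilon_j|^2$, and collecting factors (each $|\varepsilon_j|^2$ appearing $j-1$ times) yields
$$
\log|D_k|\;\le\;\log P_n+2\sum_{j=1}^n(j-1)\log|\varepsilon_j|.
$$

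Writing $S:=\sum_{j=1}^n(j-1)\log|\varepsilon_j|$, the remaining step is to bound $S$ by a quantity involving $m_k(\varepsilon)$ and $A(k)$. The unit relation $\sum_j\log|\varepsilon_j|=\log|\mathrm{Norm}(\varepsilon)|=0$ lets me replace each coefficient $(j-1)$ by $(j-1)-c$ for any $c\in\R$ without changing $S$. I would then apply Cauchy--Schwarz, but grouping the sum over archimedean \emph{places} rather than over conjugates: each complex place contributes two equal adjacent terms to $S$, while in $m_k(\varepsilon)$ the normalization $\|\cdot\|_\omega=|\cdot|^2$ makes that place contribute $(\log\|\varepsilon\|_\omega)^2=4(\log|\varepsilon_j|)^2$. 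Matching these two weightings and optimizing the shift $c$ produces a Cauchy--Schwarz bound for $S$ whose combinatorial coefficient depends on the admissible interleaving of real positions and complex-pair positions in $\{1,\dots,n\}$.

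The main obstacle will be to identify the worst-case interleaving and to evaluate the extremal coefficient in closed form. In the totally real case this reduces to the classical identity $\sum_{j=1}^n(j-(n+1)/2)^2=(n^3-n)/12$, which gives the value $(n^3-n)/3$ for $A(k)^2$; in mixed signature the half-integer weights forced by the adjacent pairing of complex conjugates complicate the bookkeeping, but after simplification they produce the correction $-4r_2^3-2r_2$, yielding $A(k)^2=(n^3-n-4r_2^3-2r_2)/3$. This explicit evaluation is what makes the inequality sharp and is the bulk of the technical work.
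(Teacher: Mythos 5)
Your outline is precisely the standard Remak argument---the proof the paper itself invokes only by citation to \cite[\S6]{Re} and \cite[Lemma 3.4]{Fr}: bound $|D_k|$ by $\operatorname{disc}(\Z[\varepsilon])$, split off $P_n$, and estimate $2\sum_j(j-1)\log|\varepsilon_j|$ by Cauchy--Schwarz over the archimedean places after an additive shift, with each complex-conjugate pair placed in adjacent positions and carrying the midpoint weight $j-\tfrac12-c$; and the bookkeeping you defer does close: taking $c=(n+1)/2$, replacing the pair's two squared coefficients by the single midpoint square saves $(j+\tfrac12-c)^2+\tfrac12$ per pair, and since distinct pair midpoints are at least $2$ apart the total saving is at least $(r_2^3-r_2)/3+r_2/2$, turning $(n^3-n)/12$ into exactly $(n^3-n-4r_2^3-2r_2)/12$, i.e.\ $A(k)^2/4$. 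One caveat: your argument (correctly, as in \cite{Fr}) yields $\log|D_k|\le A(k)\bigl(\sum_\omega(\log\|\varepsilon\|_\omega)^2\bigr)^{1/2}+\log P_n$, so $m_k(\varepsilon)$ in the statement must be read as this Euclidean length rather than literally as the sum of squares in \eqref{Mk}---which is also how the paper uses it in Lemma \ref{Discbound}.
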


\begin{lemma}\label{RePo} $\mathrm{\big(Remak, Pohst\,}$\cite[(18)]{Re}\cite[Satz IV]{Po}$\mathrm{\big)}$
  If $z_1,...,z_n$ are  non-zero complex numbers arranged so that $|z_1|\le\cdots\le|z_n|$, then
\begin{equation}\label{RemIneq}
P_n(z_1,...,z_n):=\prod_{1\leq i<j\leq n} \Big|1-\frac{z_i}{z_j} \Big|^2 \leq n^n.
\end{equation}
 If, in addition, $n\le 11$ and $z_i\in\R\ (1\le i\le n)$, then
\begin{equation}\label{PoIn}
 P_n(z_1,...,z_n)   \leq 4^{\lfloor n/2\rfloor},
\end{equation}
where $ \lfloor n/2 \rfloor:=(n-1)/2 $ if $n$ is odd,   $ \lfloor n/2 \rfloor:=n/2 $ if $n$ is even.
\end{lemma}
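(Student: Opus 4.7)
The statement bundles two independent inequalities, attributed separately to Remak and to Pohst, and I would prove them by different methods. Both reduce by the scale invariance $P_n(cz_1,\dots,cz_n)=P_n(z_1,\dots,z_n)$ to the normalization $|z_n|=1$, so that $|z_j|\le 1$ for every $j$.

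For (\ref{RemIneq}), the natural approach is to realize $P_n$, up to an explicit scalar denominator, as the squared modulus of a Vandermonde-type determinant and then apply Hadamard's inequality. Concretely, from the Vandermonde identity $\prod_{i<j}(z_j-z_i)=\det V$ with $V_{ij}=z_i^{j-1}$, dividing column $j$ by $z_n^{j-1}$ produces a matrix whose entries $(z_i/z_n)^{j-1}$ all have modulus $\le 1$; Hadamard's inequality on the columns then yields
\[
\prod_{i<j}|z_j-z_i|^2\le n^n\,|z_n|^{n(n-1)}.
\]
The delicate step is to refine this to
\[
\prod_{i<j}|z_j-z_i|^2\le n^n\prod_{j=1}^n |z_j|^{2(j-1)},
\]
which is what is actually required to recover $P_n\le n^n$. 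A one-shot Hadamard does not suffice: the crude estimate $|z_j|\le|z_n|$ runs the wrong way. I would therefore either iterate the argument, peeling off the largest root and inducting on $n$, or reprove the bound through a Mahler-type estimate in which $\prod_j\max(1,|z_j|)=1$ under the normalization. This bookkeeping is where the main work lies, and it is presumably the content of \cite[\S6]{Re}.

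For (\ref{PoIn}) I would exploit the real structure directly. Each factor $|1-z_i/z_j|^2\le 4$ attains $4$ only when $z_i=-z_j$, and the equalities $z_i=-z_j$ and $z_i=-z_k$ with $k\ne j$ are mutually incompatible, so at most $\lfloor n/2\rfloor$ disjoint pairs can saturate the bound simultaneously. This points to the extremal configuration $z_{2k-1}\approx -z_{2k}$, with one near-zero coordinate if $n$ is odd. With $z_n=1$ fixed by scale and sign, I would argue by induction on $n$: first show via Lagrange/calculus that the maximum of $P_n$ on the ordered cube $\{|z_1|\le\cdots\le|z_{n-1}|\le 1\}$ lies on the boundary, then pair the outermost variable $z_{n-1}$ with $-z_n=-1$, check that the resulting cross terms degenerate in a controlled way, and recurse on the remaining $n-2$ variables to produce the factor $4^{\lfloor n/2\rfloor}$. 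The explicit bound $n\le 11$ is the clear sign that the boundary/induction step becomes combinatorially delicate (and presumably fails cleanly beyond this range), so isolating the right inductive invariant, rather than the algebra of any single step, will be the main obstacle and the heart of the argument in \cite[Satz IV]{Po}.
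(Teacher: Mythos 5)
Both halves of your proposal are strategies rather than proofs, and the paper itself offers no proof to compare against: Lemma \ref{RePo} is quoted, with the two inequalities attributed to Remak \cite[(18)]{Re} and Pohst \cite[Satz IV]{Po}. Judged on its own terms, your sketch has a genuine gap in each part. For \eqref{RemIneq}, your Hadamard step is correct but, as you yourself note, it only yields $\prod_{i<j}|z_j-z_i|^2\le n^n|z_n|^{n(n-1)}$, which after normalizing $|z_n|=1$ is exactly Mahler's bound $\prod_{i<j}|z_i-z_j|^2\le n^n\prod_i\max(1,|z_i|)^{2(n-1)}$; what \eqref{RemIneq} needs is the strictly stronger $\prod_{i<j}|z_j-z_i|^2\le n^n\prod_j|z_j|^{2(j-1)}$, whose right-hand side carries the extra factor $\prod_j|z_j|^{2(j-1)}\le1$. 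Neither of your proposed repairs closes this. The Mahler-type route cannot: for every rescaling $z_i\mapsto cz_i$ one has $\prod_i\max(1,|cz_i|)^{2(n-1)}\ge\prod_i|cz_i|^{2(i-1)}$, so no normalization makes Mahler's inequality imply Remak's. Peeling off the largest root and inducting also fails as stated: the peeled factor $\prod_{i<n}|1-z_i/z_n|^2$ can be as large as $4^{n-1}$, while the inductive hypothesis only tolerates a slack of $n^n/(n-1)^{n-1}\approx en$; since equality in \eqref{RemIneq} holds at the $n$-th roots of unity, any proof must exploit a quantitative trade-off between $P_{n-1}$ and the peeled factor, and your sketch does not identify one. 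That trade-off is precisely the content you defer to \cite{Re}.

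For \eqref{PoIn}, the observation that at most $\lfloor n/2\rfloor$ of the $\binom{n}{2}$ factors can simultaneously equal $4$ bounds the number of saturating factors, not the product: the remaining factors can each be arbitrarily close to $4$, and the entire difficulty is showing that a factor near $2$ (in the square-root normalization) forces others to be small. This is exactly the kind of compensation encoded in Pohst-type inequalities such as Lemma \ref{PohstIneq} and exploited through the explicit groupings in the paper's Lemma \ref{P7bound}; your plan (maximum on the boundary, pair $z_{n-1}$ with $-z_n$, recurse) is never carried out and no inductive invariant is proposed. Moreover, the hypothesis $n\le11$ does not mark where a clean induction breaks down: Pohst's Satz IV is proved by an explicit optimization with extensive case analysis carried out degree by degree up to $11$, with the general case left open. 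So as a citation of known results (which is how the paper uses the lemma) your text is fine, but as a self-contained proof both parts are incomplete, and the missing ingredients are the actual cores of Remak's and Pohst's arguments.
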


Our main task will be to improve on Remak's bound $P_7\le7^7$ when   5 of the $z_i$'s are real and the remaining two are complex conjugates. We begin more generally, assuming henceforth that $n-2$ of  the $z_i$'s are real and the remaining two are complex conjugates.  We shall denote the real elements by $r_i\ (1\le i\le n-2)$ and the complex conjugate pair by $ x\e^{i\theta}  $ and $x\e^{-i\theta}\ (\theta\in(0,\pi),\  x>0)$, arranging them so that
\begin{equation}\label{Notation}
0<|r_1|\le |r_2|\le\cdots\le |r_{n-2}|,\qquad |r_t|\le x\le |r_{t+1}|,
\end{equation}
where if $x\ge |r_{n-2}|$ we mean $t=n-2$, and if $x\le |r_1|$ we mean $t=0$.

 Grouping the factors $|1-\frac{z_i}{z_j}|^2$ in \eqref{RemIneq} according to whether both, none or one  of $z_i,z_j\in\R$, $P_n$  factors as
\begin{equation}\label{Pncm}
 P_n=  P_{n-2}(r_1,...,r_{n-2})\cdot|1-\e^{-2i\theta}|^2  \cdot\prod_{m=1}^{n-2}|1-c_m\e^{i\theta}|^4,\ \ \ \quad c_m:=
\begin{cases}r_m/x &\text{ if }m \leq t,\\
 x/r_m &\text{ if }m > t.
\end{cases}
\end{equation}
Note that $c_m\in[-1,1], \ c_m\not=0\ \,  (1\le m\le n-2)$.

\begin{lemma}\label{RayEstimate}
If  $\,0\le c \le 1$, then
\begin{equation*}
 \big|1-c \,\e^{i\theta} \big|^2\leq
\begin{cases}
1\ &\mathrm{if} \ 0\le \theta \le\pi/3,\\
2\big(1-\cos(\theta)\big) &\mathrm{if} \ \pi/3\le \theta \le\pi . \end{cases}\
\end{equation*}
 If  $\,-1\le c \le 0$, then
\begin{equation*}
 \big|1-c \, \e^{i\theta} \big|^2\leq
\begin{cases}
1\ &\mathrm{if} \ 2\pi/3\le \theta \le\pi,\\
2\big(1+\cos(\theta)\big) &\mathrm{if} \ 0\le \theta \le2\pi/3. \end{cases}\
\end{equation*}
\end{lemma}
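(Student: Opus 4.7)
The plan is to expand $|1 - c\e^{i\theta}|^2$ as a quadratic in $c$ and observe that it is convex in $c$, so its maximum on any closed interval is attained at an endpoint. The two cases of the lemma then correspond simply to which endpoint of $[0,1]$ (or of $[-1,0]$) produces the larger value, and the break-points $\pi/3$ and $2\pi/3$ are precisely the thresholds at which the two endpoint values coincide.

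Concretely, writing
\[
f(c) := |1 - c\e^{i\theta}|^2 = 1 - 2c\cos\theta + c^2,
\]
the second derivative $f''(c)=2$ is positive, so for $c\in[0,1]$ one has $f(c)\le\max\{f(0),f(1)\}=\max\{1,\,2(1-\cos\theta)\}$. I would then note that $2(1-\cos\theta)\le 1$ iff $\cos\theta\ge 1/2$ iff $\theta\in[0,\pi/3]$; on this range the bound $f(c)\le 1$ holds, while on the complementary range $\theta\in[\pi/3,\pi]$ the bound $f(c)\le 2(1-\cos\theta)$ holds. This proves the first half of the lemma.

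For the case $-1\le c\le 0$, I would use the symmetry $(c,\theta)\leftrightarrow(-c,\pi-\theta)$. Substituting $c=-c'$ with $c'\in[0,1]$ into the quadratic above and using $\cos(\pi-\theta)=-\cos\theta$ gives $|1-c\e^{i\theta}|^2=|1-c'\e^{i(\pi-\theta)}|^2$. Applying the first half to $(c',\pi-\theta)$ then yields the second half directly: the condition $\pi-\theta\le\pi/3$ becomes $\theta\ge 2\pi/3$, and $2(1-\cos(\pi-\theta))=2(1+\cos\theta)$, matching the stated bounds exactly.

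The main point is simply to notice that one need not track the position of $c\e^{i\theta}$ in the disk geometrically; the entire lemma reduces to checking a one-variable convex quadratic on $[0,1]$, together with a single symmetry reduction $\theta\mapsto\pi-\theta$. Consequently I do not foresee any real obstacle in carrying out the argument.
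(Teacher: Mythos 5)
Your proposal is correct and is essentially the paper's own argument: the paper likewise observes that $g(c)=1+c^2-2c\cos(\theta)$ has its critical point at a minimum (i.e.\ is convex in $c$), so the maximum is taken at the endpoints, and the thresholds $\pi/3$, $2\pi/3$ come from comparing those endpoint values. Your symmetry reduction $(c,\theta)\mapsto(-c,\pi-\theta)$ for the second half is a minor, harmless streamlining of the same endpoint comparison.
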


\begin{proof}
 Let $g(c):=|1-c\,\e^{i\theta}|^2=1+c^2-2c\cos(\theta)$. The critical point of $g$ is a  minimum, so we just  compare the values of $g$ at the endpoints of the intervals involved.
\end{proof}

\begin{lemma}\label{CosIneqLemma}
For $a,b >0$ and $\theta \in \mathbb{R}$,  we have
\begin{equation}\label{CosIneq}
\big(1-\cos^2(\theta)\big)^a \big(1-\cos(\theta)\big)^b \leq \frac{2^{2a+b} a^a (a+b)^{a+b}}{(2a+b)^{2a+b}}.
\end{equation}
\end{lemma}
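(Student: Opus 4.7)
The plan is to reduce the inequality to a one-variable calculus maximization and to identify the optimizer explicitly. Setting $u:=\cos(\theta)\in[-1,1]$, the left-hand side becomes
\begin{equation*}
f(u):=(1-u^2)^a(1-u)^b=(1+u)^a(1-u)^{a+b},
\end{equation*}
after factoring $1-u^2=(1-u)(1+u)$ and combining powers. The inequality will follow once we show $\max_{u\in[-1,1]} f(u)$ equals the right-hand side of \eqref{CosIneq}.

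First I would handle the endpoints: $f(-1)=f(1)=0$, so the maximum is attained at an interior critical point. Taking the logarithmic derivative,
\begin{equation*}
\frac{f'(u)}{f(u)}=\frac{a}{1+u}-\frac{a+b}{1-u},
\end{equation*}
and setting this to zero yields $a(1-u)=(a+b)(1+u)$, so that the unique critical point in $(-1,1)$ is $u_\ast=-b/(2a+b)$. Since $f\ge 0$ on $[-1,1]$ and vanishes at the endpoints, $u_\ast$ must be the global maximum (alternatively, $\log f$ is strictly concave on $(-1,1)$, being a positive combination of strictly concave functions $\log(1+u)$ and $\log(1-u)$).

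The remaining step is to substitute $u_\ast$ back into $f$. Computing
\begin{equation*}
1+u_\ast=\frac{2a}{2a+b},\qquad 1-u_\ast=\frac{2(a+b)}{2a+b},
\end{equation*}
yields
\begin{equation*}
f(u_\ast)=\left(\frac{2a}{2a+b}\right)^{\!a}\left(\frac{2(a+b)}{2a+b}\right)^{\!a+b}=\frac{2^{2a+b}\,a^a(a+b)^{a+b}}{(2a+b)^{2a+b}},
\end{equation*}
which is exactly the right-hand side of \eqref{CosIneq}. I do not expect any serious obstacle here: the computation is completely routine once one writes $1-u^2=(1-u)(1+u)$ to combine the two factors into a single weighted product, which is the only non-mechanical observation needed.
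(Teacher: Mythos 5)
Your proof is correct and follows essentially the same route as the paper: reduce to maximizing $g(x)=(1-x^2)^a(1-x)^b$ on $[-1,1]$ by elementary calculus, locate the critical point at $x=-b/(2a+b)$, and evaluate there to obtain the right-hand side of \eqref{CosIneq}. You simply carry out explicitly the computation the paper leaves to the reader, including the convenient rewriting $(1-u^2)^a(1-u)^b=(1+u)^a(1-u)^{a+b}$ and the concavity/endpoint check guaranteeing a global maximum.
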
	
\begin{proof} For $-1\le x\le1$, let $g(x):=(1-x^2)^a (1-x)^b$. Elementary calculus shows that   $g$  assumes its
maximum value $M$ at $x=-b/(2a+b)$, and that $M$ is given by the right-hand side of \eqref{CosIneq}.
\end{proof}

\begin{lemma}\label{BThetaLemma} Assume $\theta\in\R$ and $-1\le  c_m\le 1$ for $1\le m\le r$. Let  $d_+$ be the number of  $c_m$ with $c_m>0$,  let $d_{-}$  be the number of   $c_m$ with $c_m<0$, and define
\begin{equation}\label{BTheta}
B_r=B_r(\theta,c_1,...,c_r):=|1-\,\e^{-2i\theta}|^2\prod_{m=1}^{r}|1-c_m\,\e^{i\theta}|^4.
\end{equation}
 Then
\begin{equation}\label{BthetaIneq}
 B_r \leq \max\!\Big(\frac{4^{2a+b} a^a(a+b)^{a+b}}{(2a+b)^{2a+b }}\,,\,\frac{4^{2+f}(1+f)^{1+f}}{(2+f)^{2+f}}\Big),
\end{equation}
where $a:=1+2\min(d_+,d_{-})$, $b:= 2|d_+ -  d_{-}|$ and $f:=2\max(d_+, d_{-})$.
\end{lemma}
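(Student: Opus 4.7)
The plan is to bound $B_r(\theta,c_1,\ldots,c_r)$ by splitting the range of $\theta$ into three subintervals on which Lemma \ref{RayEstimate} gives clean estimates for every factor. By periodicity and the symmetry $B_r(-\theta)=B_r(\theta)$ (a consequence of $|1-c_m\e^{-i\theta}|^4=|1-c_m\e^{i\theta}|^4$) I may restrict to $\theta\in[0,\pi]$. The three intervals $[0,\pi/3]$, $[\pi/3,2\pi/3]$ and $[2\pi/3,\pi]$ are chosen so that for each $c_m$ Lemma \ref{RayEstimate} bounds $|1-c_m\e^{i\theta}|^2$ by either $1$ or by $2(1\mp\cos\theta)$. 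Combined with the identity $|1-\e^{-2i\theta}|^2=4(1-\cos^2\theta)$, this reduces $B_r$ on each subinterval to an expression of the form $C\,(1-\cos\theta)^p(1+\cos\theta)^q$, to which Lemma \ref{CosIneqLemma} applies.

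In the middle interval every $c_m$-factor contributes non-trivially, so
\[
B_r\le 4^{1+d_++d_-}(1-\cos\theta)^{1+2d_+}(1+\cos\theta)^{1+2d_-}.
\]
Assuming without loss of generality that $d_-\ge d_+$ (the substitution $\theta\mapsto\pi-\theta$ fixes $|1-\e^{-2i\theta}|^2$ while interchanging the roles of $d_+$ and $d_-$), I would factor out $(1-\cos^2\theta)^{1+2d_+}$ and rewrite the bound as $4^{1+d_++d_-}(1-\cos^2\theta)^a(1+\cos\theta)^b$ with $a=1+2\min(d_+,d_-)$ and $b=2|d_+-d_-|$. Applying Lemma \ref{CosIneqLemma} (after $\theta\mapsto\pi-\theta$, which converts $1+\cos\theta$ into $1-\cos\theta$), using $a+b=1+2\max(d_+,d_-)$ and $2a+b=2+2(d_++d_-)$, and merging the powers of $2$ into $4^{1+d_++d_-}$, produces exactly the first entry of the claimed maximum.

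On each outer interval one sign class gives only trivial factors. On $[0,\pi/3]$ every positive-$c_m$ factor is bounded by $1$, yielding $B_r\le 4^{1+d_-}(1-\cos^2\theta)(1+\cos\theta)^{2d_-}$; Lemma \ref{CosIneqLemma} with $a=1$, $b=2d_-$ then returns $\frac{4^{2+f}(1+f)^{1+f}}{(2+f)^{2+f}}$ with $f=2d_-$, and the symmetric argument on $[2\pi/3,\pi]$ gives the same expression with $f=2d_+$. Since $f\mapsto\frac{4^{2+f}(1+f)^{1+f}}{(2+f)^{2+f}}$ is increasing (its logarithmic derivative equals $\log\!\bigl(4(1+f)/(2+f)\bigr)\ge 0$), the worse of the two outer bounds corresponds to $f=2\max(d_+,d_-)$, matching the second entry of the maximum. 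The main obstacle is not conceptual but algebraic: one must carefully track the $\theta\mapsto\pi-\theta$ substitutions and the identifications of $a$, $b$ and $f$ so that the output of Lemma \ref{CosIneqLemma} rearranges precisely into the two stated formulas.
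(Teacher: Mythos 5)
Your proposal is correct and follows essentially the same route as the paper's proof: restrict to $\theta\in[0,\pi]$, split at $\pi/3$ and $2\pi/3$, bound each factor via Lemma \ref{RayEstimate}, and finish with Lemma \ref{CosIneqLemma}. The only (harmless) cosmetic differences are your use of the $\theta\mapsto\pi-\theta$ symmetry to assume $d_-\ge d_+$ instead of treating the sign cases separately, and your appeal to monotonicity of $f\mapsto 4^{2+f}(1+f)^{1+f}/(2+f)^{2+f}$ on the outer intervals where the paper instead raises the exponent of $1+\cos\theta$ (which is $\ge 1$ there) from $2d_{\mp}$ to $f$ before applying Lemma \ref{CosIneqLemma}.
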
	
\begin{proof} Replacing $\theta$ by $-\theta$ if necessary, we can assume $0\le\theta\le\pi$.
We shall first show  that  if $\pi/3\le \theta \le2\pi/3$, then $ B_r $ is  bounded  by the first element inside the max in \eqref{BthetaIneq}.  Say  $d_+ > d_{-}$, so that $a=1+2d_{-}$ and $b=2(d_+-d_{-})$. Then,  using Lemma \ref{RayEstimate} and $\pi/3\le\theta \le2\pi/3$,
\begin{equation*}
\begin{split}
B_r & = 4\big(1-\cos^2(\theta)\big)\Big(\prod_{m=1}^r |(1-c_m e^{i\theta})|^2\Big)^2 \\
& \le 4\big(1-\cos^2(\theta)\big)\Big(\prod_{\substack{m\\ c_m>0}}2 \big(1-\cos(\theta)\big)\Big)^2  \Big(\prod_{\substack{m\\ c_m<0}} 2 \big(1+\cos(\theta)\big)\Big)^2 \\ & =2^{2+2(d_+ \,+\, d_{-})}\big(1-\cos^2(\theta)\big)\big(1-\cos(\theta)\big)^{  2d_+}\big(1+\cos(\theta)\big)^{ 2 d_{-}}
 \\ & = 2^{2a+b}\big(1-\cos^2(\theta)\big)^{1+ 2d_{-}}\big(1-\cos(\theta)\big)^{ 2(d_+ -  d_{-})}  \\
&=2^{2a+b} \big(1-\cos^2(\theta)\big)^a \big(1-\cos(\theta)\big)^b  \\
&\leq\frac{2^{2(2a+b)} a^a (a+b)^{a+b}}{(2a+b)^{2a+b}}  \qquad(\text{see Lemma }\ref{CosIneqLemma}),
\end{split}
\end{equation*}
proving \eqref{BthetaIneq} in this case. If   $d_+ < d_{-}$, a similar argument gives
$$
B_r \le  2^{2a+b}\big(1-\cos^2(\theta)\big)^{1+ 2d_+}\big(1+\cos(\theta)\big)^{ 2(d_{-}-d+)},
$$
and \eqref{BthetaIneq} follows as above from Lemma \ref{CosIneqLemma} (with  $\theta$ replaced  by $\theta+\pi$). The case $ d_+ = d_{-}$ is clear, since then $b=0$ and we get
$ B_r\le 2^{2a}\big(1-\cos^2(\theta)\big)^{1+ 2d_+}\le 2^{2a}$, proving \eqref{BthetaIneq} when $\pi/3\le \theta \le2\pi/3$.

If  $0\le\theta  < \pi/3$, we again use Lemmas \ref{RayEstimate} and \ref{CosIneqLemma} to get
\begin{equation*}
\begin{split}
B_r & \le 4\big(1-\cos^2(\theta)\big)  \Big(\prod_{\substack{m\\ c_m<0}}  2\big(1+\cos(\theta)\big)\Big)^2= 2^{2+2d_{-}}\big(1-\cos^2(\theta)\big) \big(1+\cos(\theta)\big)^{2d_{-}}\\
&  \le 2^{2+f}\big(1-\cos^2(\theta)\big) \big(1+\cos(\theta)\big)^f\le  2^{2+ f } \frac{2^{2+f}   (1+f)^{1+f}}{(2+f)^{2+f}}.
\end{split}
\end{equation*}
  A similar argument proves \eqref{BthetaIneq} in  the remaining case, \ie when  $ 2\pi/3<\theta \le\pi  $.
 \end{proof}

\begin{lemma}(Pohst)\label{PohstIneq} For $\alpha,\beta\in[-1,1] $, the following hold.
\begin{align*}
&(i)\ \ \ \ \mathrm{If}\ \alpha\ge 0, \ \mathrm{then}\ \ (1-\alpha)(1-\alpha \beta) \leq1.{\phantom{XXXXXXXXXXXXXXX}}\\
&(ii)\ \ \     (1-\alpha)(1-\beta)(1-\alpha \beta) \leq 2.\\
&(iii)\  \ \mathrm{If}\     |\alpha|\le|\beta|  \  \mathrm{and} \ \beta\not=0, \ \mathrm{then}\ \ \big(1-\alpha\big)\big(1-\beta\big)\big(1-(\alpha/\beta)\big) \leq 2.
\end{align*}
\end{lemma}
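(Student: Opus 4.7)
The plan is to prove the three parts in the order (i), (iii), (ii), with (iii) falling directly out of (ii) by a substitution.

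Part (i) is immediate: from $\alpha\ge 0$ and $|\beta|\le 1$ one has $\alpha\beta\ge -\alpha$, so $1-\alpha\beta\le 1+\alpha$; multiplying by the nonnegative factor $1-\alpha$ gives $(1-\alpha)(1-\alpha\beta)\le 1-\alpha^2\le 1$.

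For (iii), set $\gamma:=\alpha/\beta$. The hypothesis $|\alpha|\le|\beta|$ gives $\gamma\in[-1,1]$, and since $\alpha=\gamma\beta$ the left-hand side equals $(1-\gamma\beta)(1-\beta)(1-\gamma)$, which is precisely the quantity bounded by (ii) with $(\gamma,\beta)$ in place of $(\alpha,\beta)$.

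The heart of the argument is (ii). Since $\alpha,\beta\in[-1,1]$ forces each of $1-\alpha$, $1-\beta$, $1-\alpha\beta$ to be nonnegative (the last because $|\alpha\beta|\le 1$), the product is nonnegative; and the statement is symmetric in $\alpha$ and $\beta$, so I may split on signs assuming $\alpha\le\beta$. If both are nonnegative, (i) gives $(1-\alpha)(1-\alpha\beta)\le 1$, and multiplying by $1-\beta\in[0,1]$ yields the bound $\le 1$. If the signs are mixed, then necessarily $\alpha=-a\le 0\le\beta$ with $a,\beta\in[0,1]$, and the short identity
\[ (1-\beta)(1+a\beta)=1-\beta\bigl[(1-a)+a\beta\bigr]\le 1 \]
shows that the full product $(1+a)(1-\beta)(1+a\beta)$ is at most $1+a\le 2$. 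The remaining case $\alpha\le\beta\le 0$ is the only delicate one: writing $\alpha=-a$, $\beta=-b$ with $a,b\in[0,1]$, I would bound $f(a,b):=(1+a)(1+b)(1-ab)$ by $2$ on the square. The vanishing of both partial derivatives forces $1-b-2ab=0=1-a-2ab$, whence $a=b=1/2$ and $f(1/2,1/2)=27/16<2$; each of the four edges reduces to a one-variable problem where the maximum $2$ occurs at the corners $(a,b)=(1,0)$ and $(0,1)$.

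The main obstacle is precisely this last case: the other two collapse to (i) or to a single monotonicity step, whereas the same-sign-negative case demands a genuine two-variable optimization. Once one observes that the interior critical value $27/16$ is well below $2$, the boundary analysis on $[0,1]^2$ is routine, and the sharpness of the constant $2$ in (ii) is visible at the corners.
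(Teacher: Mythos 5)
Your argument is correct and matches the paper's proof in its essentials: part (iii) is obtained from (ii) by the same substitution $\alpha\mapsto\alpha/\beta$, and (ii) is ultimately settled by a critical-point-and-boundary check, which is exactly the method the paper (citing Pohst) invokes for (i) and (ii). The only difference is cosmetic: you handle (i) and the nonnegative and mixed-sign cases of (ii) by short algebraic estimates, so the optimization is needed only on the quadrant $\alpha,\beta\le 0$, where your computation (interior critical value $27/16$, maximum $2$ attained at the corners $(\alpha,\beta)=(-1,0)$ and $(0,-1)$) is accurate.
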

\begin{proof}
Inequalities $(i)$ and $(ii)$ \cite[p.\ 468]{Po} can be proved by checking for critical points and the boundary. The last  one follows from $(ii)$, on replacing  $\alpha$ by $\alpha/\beta$.
\end{proof}

We now specialize to $n=7$.
\begin{lemma}\label{P7bound} Suppose $n=7$ and $c_1>0$  in \eqref{Pncm}, then $ P_7<\e^{12}<162755.$
\end{lemma}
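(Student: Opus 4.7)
The plan is to use the factorization $P_7=P_5(r_1,\ldots,r_5)\cdot B_5(\theta,c_1,\ldots,c_5)$ coming from \eqref{Pncm}, combining Pohst's bound $P_5\le 4^2=16$ from \eqref{PoIn} with the estimate of Lemma \ref{BThetaLemma} for $B_5$, and invoking the pairing inequalities of Lemma \ref{PohstIneq} in the cases where the preceding two bounds alone are insufficient. Since $c_1>0$ forces $d_+\ge 1$, the split $(d_+,d_-)$ lies in $\{(5,0),(4,1),(3,2),(2,3),(1,4)\}$.

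For the two middle splits $(3,2)$ and $(2,3)$ one has $(a,b,f)=(5,2,6)$ in \eqref{BthetaIneq}, which yields $B_5\le 4^{12}\cdot 5^5\cdot 7^7/12^{12}<4900$; multiplying by $P_5\le 16$ already gives less than $\mathrm{e}^{12}$, so these two cases are settled outright. For the three unbalanced splits $(5,0)$, $(4,1)$ and $(1,4)$ the raw combination is too wasteful: in the case $(5,0)$ Lemma \ref{BThetaLemma} alone gives $B_5\le 11^{11}/3^{12}\approx 5.4\cdot 10^5>\mathrm{e}^{12}$. Here the saving must come from a joint bound. The key observation is that the $B_5$-extremisers push every $c_m$ towards $\pm 1$, which by the definition of $c_m$ in \eqref{Pncm} drives $|r_m|\to x$ and collapses $P_5$. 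I would make this trade-off quantitative by extracting the factor $|1-c_1\mathrm{e}^{i\theta}|^4$ from $B_5$ and pairing it with the four real factors $(1-r_1/r_m)^2$ that appear in $P_5$ for $m=2,\ldots,5$. These latter equal $(1-c_1c_m)^2$ when $r_m>x$, in which case Lemma \ref{PohstIneq}(i) applies and uses precisely the positivity hypothesis $c_1>0$; and they equal $(1-c_1/c_m)^2$ when $r_m\le x$, where Lemma \ref{PohstIneq}(iii) applies, its hypothesis $|c_1|\le|c_m|$ being automatic from the ordering $|r_1|\le|r_m|$.

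The main obstacle will be the case $(5,0)$, where all the $r_m$ have the same sign and the coupling between $P_5$ and $B_5$ must be handled most tightly. Organising the four Pohst pairings cleanly while tracking, for each $m\in\{2,3,4,5\}$, whether $r_m$ lies above or below $x$ is the technical heart of the argument; the generous target $\mathrm{e}^{12}$ should allow enough slack in the other two unbalanced cases for an essentially parallel argument to go through.
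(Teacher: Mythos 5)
Your treatment of the balanced splits $(3,2)$ and $(2,3)$ is exactly the paper's case (1) and is fine. The problem is that the three unbalanced splits, which you correctly identify as the real content, are only sketched, and the mechanism you sketch does not work as stated. You propose to extract $|1-c_1\e^{i\theta}|^4$ from $B_5$, pair it with the four factors $(1-r_1/r_m)^2$ of $P_5$, and invoke Lemma \ref{PohstIneq}. But part (i) of that lemma bounds $(1-\alpha)(1-\alpha\beta)$ with $\alpha\ge0$, so using it with $\alpha=c_1>0$ requires a factor $(1-c_1)$; the complex factor only satisfies $|1-c_1\e^{i\theta}|^2\le(1+c_1)^2$ (and is bounded \emph{below} by $(1-c_1)^2$), so no factor of the form $(1-c_1)$ is available. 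Part (iii) would give $(1+c_1)(1+c_m)\big(1-(c_1/c_m)\big)\le2$, but that needs the companion factor $(1+c_m)$, which your split leaves inside $B_4(\theta,c_2,\dots,c_5)$. Moreover, bounding the leftover pieces independently is too lossy: in the split $(5,0)$ your paired quantity $|1-c_1\e^{i\theta}|^4\prod_{m\ge2}(1-r_1/r_m)^2$ can approach $16$ (take $\theta$ near $\pi$, $x$ near $r_1$, and $r_m/r_1$ large), while Lemma \ref{BThetaLemma} applied to the remaining $B_4$ only gives about $4\cdot10^4$, so the product of the separate worst cases far exceeds $\e^{12}$. The saving you allude to (large $c_1$ forces the other $c_m$ small) is precisely the coupling your decomposition throws away; capturing it requires pairing \emph{each} real factor $(1-r_\ell/r_{\ell'})$ with \emph{both} $(1+c_\ell)$ and $(1+c_{\ell'})$.

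For comparison, the paper closes these cases differently. In the all-positive split $(5,0)$ it bounds $|1-\e^{-2i\theta}|\le2$, replaces every complex factor by $(1+c_m)^2$, and distributes these along the $5$-cycle $R_{1,2}R_{2,3}R_{3,4}R_{4,5}R_{1,5}$ with $R_{\ell,\ell'}:=(1+c_\ell)(1+c_{\ell'})\big(1-(r_\ell/r_{\ell'})\big)\le2$ by Lemma \ref{PohstIneq} (ii)/(iii) (the case distinction being the position of $t$), the remaining real factors being at most $1$; this gives $P_7\le2^{12}$. In the splits $(4,1)$ and $(1,4)$ it does not touch $B_5$ at all (keeping $B_5<40624$) and instead sharpens Pohst's real bound to $P_5\le4$ by a sign-pattern analysis of $x_i=r_i/r_{i+1}$ (Table 1) with ad hoc regroupings of the $y_{\ell,\ell'}$. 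So your proposal, as written, has a genuine gap in all three unbalanced cases: the stated pairing cannot be executed with the lemmas invoked, and no alternative quantitative argument is supplied.
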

\noindent We note   that $7^7=823543\approx \e^{13.62}$, so we have gained a factor of a little over 5 compared with Remak's bound \eqref{RemIneq}.
\begin{proof}
We begin with \eqref{Pncm},
\begin{equation}\label{Pnfactors}
P_7=B_5P_5=B_5(\theta,c_1,...,c_5)P_5(r_1,...,r_5)\qquad\qquad\big(\text{see}\  \eqref{RemIneq}\text{ and  }\eqref{BTheta}\big).
\end{equation}
 Depending on the signs of the $c_m$, we will show that $B_5$ or $P_5$ is small.
There are 16 possibilities for the signs  of   $c_2,...,c_5$, which we divide into three cases:
\begin{enumerate}[(1)]
\item Three of the $c_m$ are of one sign and two have the opposite sign $(1\le m\le 5)$. Hence, in the notation of Lemma \ref{BThetaLemma},  $a=5$, $b=2$  and $f=6$.
\item  One of the $c_m$ is of one sign and four  have the opposite sign. Hence $a=3$, $b=6$ and $f=8$.
\item All of the $c_m$ are positive.
\end{enumerate}
In case (1), Lemma \ref{BThetaLemma} gives $B_5< 4842.63$ and Pohst's inequality \eqref{PoIn} gives $P_5\le 16$.  Now \eqref{Pnfactors} yields  $P_7< 77483$, proving the lemma in case (1).

In case (2),  Lemma \ref{BThetaLemma} only gives
\begin{equation}\label{B5case2}
B_5<  40624,
\end{equation}
 but we will improve   Pohst's bound  to $P_5\le 4$. This just suffices to prove the lemma in this case. Following Pohst \cite[p.\ 467]{Po}, for $1\le i,\ell,\ell'\leq 4$ let
$$
x_i:=\frac{ r_i}{ r_{i+1}},\qquad  \qquad  \qquad    y_{ \ell,\ell'}:=1-  \prod_{i=\ell}^{\ell'} x_i=1-\frac{r_\ell}{r_{\ell'}},
$$  and
\begin{equation*}
A=A(x_1,x_2,x_3,x_4):= \prod_{1\leq \ell \leq \ell' \le4} y_{ \ell,\ell'}  =\sqrt{P_5(r_1,...,r_5)}.
\end{equation*}
Note that $-1\le x_i\le 1,\ 0\le y_{ \ell,\ell'}\le 2$ and that the signs of the  $x_i$'s are determined from those  of the $c_m$'s and vice-versa, as we are assuming $c_1>0$ in \eqref{Pncm}. All   5 possible signs of $c_1,...,c_5$  in case (2) are shown in Table 1.

 \begin{table}[ht]
\parbox{.45\linewidth}
\centering
\caption{All sign patterns  in case (2)}
\begin{tabular}{ccccc|cccc}
$c_1$  & $c_2$ & $c_3$ & $c_4$ & $c_5$ & $x_1$ & $x_2$ & $x_3$ & $x_4$ \\
\hline+ & + & + & + & $ -$ & + & + & + & $ -$ \\
+ & $ -$  &  $ -$  & $ -$  &  $ -$ &   $ -$  & + & + & + \\
+ & + & + &  $ -$ & +      &+ & + &  $ -$  &  $ -$  \\
+ &  $ -$ & + & + & +     &  $ -$  & $ -$  & + & + \\
+ & + &  $ -$ & + & +      & + &  $ -$  &  $ -$  & + \\
\hline
\end{tabular}
\end{table}
\noindent Since    $A(x_1,x_2,x_3,x_4)=A(x_4,x_3,x_2,x_1)$, it suffices to deal with the first, middle  and last lines in Table 1.

We factor
\begin{align} \label{AA}
A & = y_{1,1}  y_{2,2}  y_{3,3} y_{4,4} y_{ 1,2}y_{ 2,3}y_{ 3,4}y_{ 1,3}y_{ 2,4}y_{ 1,4}\\
 & =(y_{1,1} y_{2,2} y_{ 1,2})(y_{3,3} y_{ 3,4})(y_{ 2,3}y_{ 2,4})(y_{ 1,3}y_{ 1,4})(y_{4,4} )
\nonumber
\end{align}
For the first line in Table 1,  $x_1,x_2,x_1x_2\geq 0$, so  we have trivially  that $y_{1,1} y_{2,2} y_{ 1,2}\leq 1$. By lemma \ref{PohstIneq} (i), using  $x_3,x_2x_3, x_1x_2x_3 \geq 0$, we have  $y_{3,3} y_{ 3,4} \leq 1$, $ y_{ 2,3}y_{ 2,4} \leq 1$ and $ y_{ 1,3}y_{ 1,4} \leq 1$. Finally   $y_{4,4}  \leq 2$, and so $A\le 2$ for the signs on the first line of Table 1.

We consider now the third line in Table 1.  Then, grouping    \eqref{AA}        differently,
\begin{equation*}
A =(y_{1,1} y_{1,4}y_{ 2,4})(y_{2,2} y_{ 2,3})(y_{ 1,2}y_{ 1,3})(y_{3,3}  y_{4,4} y_{ 3,4}).
\end{equation*}
Trivially,  $y_{1,1} y_{ 1,4}y_{ 2,4}\leq 1$. By Lemma \ref{PohstIneq} (i), since $x_2,x_1x_2 \geq 0$, we have $ y_{2,2} y_{ 2,3} \leq 1$ and $ y_{ 1,2}y_{ 1,3} \leq 1$. By Lemma \ref{PohstIneq} (ii),  $ y_{3,3} y_{4,4} y_{ 3,4}  \leq 2$, and so again $A\le 2$.

For the last  line in Table 1 we write
\begin{equation*}
A =(y_{ 1,3}y_{ 1,4}y_{ 2,4})(y_{1,1} y_{ 1,2})(y_{4,4} y_{ 3,4})(y_{2,2} y_{3,3} y_{ 2,3}).
\end{equation*}
Again  trivially,  $ y_{ 1,3}y_{ 1,4}y_{ 2,4} \leq 1$. By lemma \ref{PohstIneq} (i), since $x_1,x_4 \geq 0$, $ y_{1,1}  y_{ 1,2} \leq 1$ and $ y_{4,4}y_{ 3,4} \leq 1$. Finally,  by lemma \ref{PohstIneq} (ii), we have $y_{2,2}y_{3,3}y_{ 2,3}  \leq 2$. Thus,  in case (2) we are done proving $A\le2$, , \ie $P_5\le4$. As indicated after \eqref{B5case2}, this implies the lemma in case (2).

In  case (3) we have $ c_m>0$, and  so   $ r_m>0$ for $m=1,\dots,5$. Thus
\begin{equation}\label{yl1} 0\le 1-\dfrac{r_\ell}{r_{\ell'}}\le1 \qquad\qquad\qquad\big(  \ell<\ell' \big).
\end{equation}
We shall need
\begin{equation}\label{Rless2}
R_{\ell,\ell'}:=(1+c_\ell)(1+c_{\ell'})\big(1-(r_\ell/r_{\ell'})\big)\leq 2\qquad\qquad\qquad\big(  \ell<\ell' \big).
\end{equation}
To prove   \eqref{Rless2}, we consider three possibilities according to the position of $t$ in \eqref{Notation}. If $\ell'\le t$, then by \eqref{Pncm},  $c_\ell=r_\ell/x,\ c_{\ell'}=r_{\ell'}/x$. Hence $|c_\ell|\le |c_{\ell'}|$ and
so  Lemma \ref{PohstIneq} (iii) yields \eqref{Rless2}  (on setting $\alpha:=-c_{\ell},\ \beta:=-c_{\ell'}$). Similarly, if $\ell>t$,   $c_\ell=x/r_\ell ,\ c_{\ell'}=x/r_{\ell'}$, so $|c_{\ell'}|\le |c_\ell|$ and
  Lemma \ref{PohstIneq} (iii) yields \eqref{Rless2} (with $\alpha:=-c_{\ell'},\ \beta:=-c_\ell$). Lastly, if $\ell\le t<\ell'$,
 then   $c_\ell=r_\ell/x,\ c_{\ell'}=x/r_{\ell'}$. Now \eqref{Rless2} follows from Lemma \ref{PohstIneq} (ii).

Using \eqref{yl1} and \eqref{Rless2}, we estimate
\begin{equation*}
\begin{split}
 \sqrt{P_7}  &  = |1-e^{-2i\theta}|\ \cdot\ \prod_{1\leq\ell<\ell'\leq 5}\big(1-\frac{r_\ell}{r_{\ell'}}\big) \ \cdot\  \prod_{m=1}^5|1-c_me^{i\theta}|^2 \\ &
  \leq 2 \prod_{1\leq \ell<\ell'\leq 5}\big(1-\frac{r_\ell}{r_{\ell'}}\big)\ \cdot\ \prod_{m=1}^5 (1+c_m)^2 \\
 & = 2R_{1,2}R_{2,3}R_{3,4}R_{4,5}R_{1,5}\big(1-\frac{r_1}{r_3}\big)\big(1-\frac{r_1}{r_4}\big)\big(1-\frac{r_2}{r_4}\big)\big(1-\frac{r_2}{r_5}\big)\big(1-\frac{r_3}{r_5}\big) \\
&\le 2R_{1,2}R_{2,3}R_{3,4}R_{4,5}R_{1,5}\le 2^6.
\end{split}
\end{equation*}
Hence  $P_7\le2^{12}$. \end{proof}

We can now prove our final geometric bound.
\begin{lemma}\label{Discbound} Suppose $k$ is a number field of degree 7 having   five real places and regulator $R_k\le 3.2$. Then the discriminant $D_k$ of $k$ satisfies $\log|D_k|< 31.492 $.
\end{lemma}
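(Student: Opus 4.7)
The plan is to combine Remak's inequality (Lemma \ref{Rem}) with the new $P_7$ estimate (Lemma \ref{P7bound}), choosing $\varepsilon$ to be a suitably short unit in $\mathcal{O}_k^{\times}$. Three things need to be arranged: (i) $k=\Q(\varepsilon)$, so that Lemma \ref{Rem} applies; (ii) an explicit bound on $m_k(\varepsilon)$ in terms of $R_k$; and (iii) the sign condition $c_1>0$ needed to invoke Lemma \ref{P7bound}.

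Condition (i) is automatic for any non-torsion unit $\varepsilon$, since $n=7$ is prime and so $\Q(\varepsilon)\subset k$ must have degree $1$ or $7$. For (ii) I would apply Minkowski's theorem to the logarithmic unit lattice $\Lambda\subset\R^{6}$ defined by $\varepsilon\mapsto(\log\|\varepsilon\|_\omega)_\omega$. By the product formula $\Lambda$ sits in the five-dimensional trace-zero hyperplane, where its covolume is $\sqrt{r_1+r_2}\,R_k=\sqrt{6}\,R_k$. Hermite's inequality (with $\gamma_5=8^{1/5}$) then produces a non-torsion unit $\varepsilon$ with
\[
 m_k(\varepsilon) \le \gamma_5\bigl(\sqrt{6}\,R_k\bigr)^{2/5},
\]
which becomes an explicit numerical bound under the hypothesis $R_k\le 3.2$. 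For (iii), note that $m_k(-\varepsilon)=m_k(\varepsilon)$ and the conjugates of $-\varepsilon$ flip the sign of every real conjugate; replacing $\varepsilon$ by $-\varepsilon$ when $r_1<0$ therefore forces $r_1>0$, hence $c_1>0$ in \eqref{Pncm}, without altering $m_k(\varepsilon)$ or the fact that $k=\Q(\varepsilon)$.

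With (i)--(iii) in place, Lemma \ref{Rem} gives $\log|D_k|\le m_k(\varepsilon)\,A(k)+\log P_7$, where a direct computation yields $A(k)=\sqrt{(343-7-4-2)/3}=\sqrt{110}$, and Lemma \ref{P7bound} supplies $\log P_7<12$. Inserting the Minkowski bound on $m_k(\varepsilon)$ should then yield the claimed inequality $\log|D_k|<31.492$.

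I expect the main obstacle to be the quantitative sharpness of step (ii). The bare Hermite bound gives $m_k(\varepsilon)\lesssim 48^{1/5}\cdot 3.2^{2/5}\approx 3.45$ when $R_k\le 3.2$, leading only to $\log|D_k|\lesssim 48$, far weaker than $31.492$; matching the stated constant requires roughly $m_k(\varepsilon)\lesssim 1.86$. Closing this gap will likely demand a refinement beyond the crude Hermite estimate---for instance, extracting several linearly independent short units via Minkowski's second theorem and averaging Remak's inequality over them, exploiting structural features of the signature $(5,1)$ unit lattice, or sharpening Lemma \ref{P7bound} under the extra hypothesis (implied by a small $m_k(\varepsilon)$) that the conjugates of $\varepsilon$ are nearly balanced in absolute value.
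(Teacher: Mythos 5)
Your steps (i)--(iii) are exactly those of the paper's proof: take for $\varepsilon$ a unit attaining the positive minimum of $m_k$, i.e.\ a shortest nonzero vector of the logarithmic unit lattice (covolume $\sqrt6\,R_k$ in the trace-zero hyperplane), note $k=\Q(\varepsilon)$ because $7$ is prime, bound the length of $\varepsilon$ by Hermite's inequality with $\gamma_5=8^{1/5}$, replace $\varepsilon$ by $-\varepsilon$ if necessary so that $r_1>0$ and hence $c_1>0$ in \eqref{Pncm}, and then combine Lemma \ref{Rem} (with $A(k)=\sqrt{110}$) and Lemma \ref{P7bound} (with $\log P_7<12$). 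So the skeleton of your argument is the intended one.

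The ``main obstacle'' you flag at the end, however, is not a real one; it comes from taking the printed form of Lemma \ref{Rem} literally together with the definition \eqref{Mk}. Remak's inequality is \emph{linear}, not quadratic, in the vector $(\log\|\varepsilon\|_\omega)_\omega$: what is actually proved in \cite[\S6]{Re} and \cite[Lemma 3.4]{Fr}, and what is used in \cite{ADF}, is $\log|D_k|\le\sqrt{m_k(\varepsilon)}\,A(k)+\log P_n$ with $m_k$ the sum of squares \eqref{Mk} (the missing square root in the statement of Lemma \ref{Rem} is a misprint; equivalently, the quantity bounded by $(3.2\sqrt6)^{1/5}\sqrt{\gamma_5}<1.85847$ in \cite[(5)]{ADF} is the Euclidean length, which is precisely the square root of your Hermite bound, since $\sqrt{\gamma_5}\,(\sqrt6\,R_k)^{1/5}=\bigl(\gamma_5(\sqrt6\,R_k)^{2/5}\bigr)^{1/2}$). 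With the correct form, your own estimate finishes the proof with no new input: $m_k(\varepsilon)\le\gamma_5(3.2\sqrt6)^{2/5}<3.4539$, hence $\sqrt{m_k(\varepsilon)}<1.85847$, and therefore $\log|D_k|<1.85847\,\sqrt{110}+12<19.492+12=31.492$. In particular, none of the remedies you suggest (Minkowski's second theorem, averaging Remak's inequality over several short units, or sharpening Lemma \ref{P7bound}) is needed, and none is used in the paper; the only missing ingredient was the homogeneity of Remak's bound.
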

 \begin{proof}
Let $\varepsilon $ yield the positive minimum value of $m_k$ in \eqref{Mk} on the units of $k$. As $[k:\Q]=7$, we have $k=\Q(\varepsilon)$.  Using the value  $\gamma_5=\sqrt[5]{8}$ for Hermite's constant in dimension 5,  we find
$
m_k\le \big(3.2\sqrt{6}\big)^{1/5}\sqrt{\gamma_5}< 1.85847$  \cite[(5)]{ADF}.
Let $r_1,...r_5$ be the five real conjugates of $\varepsilon$, ordered so that $|r_1|\le\cdots\le |r_5| $, and let $x\e^{\pm i \theta}$  be the two complex conjugates  $\big(x>0,\ \theta\in(0,\pi)\big)$. Replacing $\varepsilon$ by
$-\varepsilon$ if necessary, we may assume that $r_1>0$, so $c_1>0$ with notation  as in \eqref{Pncm}. Lemmas \ref{Rem} and \ref{P7bound} yield $\log|D_k|< 31. 4918$.  \end{proof}

We shall need the following analytic tool   \cite[Lemmas 4 and 5]{ADF}.
 \begin{lemma}\label{Analy}
Let $k$ be a number field having $r_1$ real and $r_2$ complex places, and define
$$
g(x) :=\frac{1}{2^{r_1}4\pi i}\int_{2-i\infty}^{2+i\infty}(\pi^n4^{r_2}x)^{-s\slash 2}(2s-1)\Gamma(s\slash 2)^{r_1}\Gamma(s)^{r_2} \,ds\quad(x>0,\ n:=r_1+2r_2).
$$
Suppose   $0 < d_1 \leq |D_k| \leq d_2 \leq d_3$,  and assume $g(4\slash d_3) \geq 0$. Then for any  $N \in \N$  we have  $R_k  \geq 2 G(d_1, d_2, N)$, where
\begin{equation*}
G(d_1, d_2,N) := \sum_{j=1}^N \min\!\big(g(j^{2n}\slash d_1),g(j^{2n}\slash d_2)\big).
\end{equation*}
  If the ideal class of the different of $k$ is
trivial, then  $R_k\ge4G(d_1, d_2,N)$.
\end{lemma}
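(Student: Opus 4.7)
The plan is to derive the bound from the functional equation of the Dedekind zeta function $\zeta_k$ and the analytic class number formula, with positivity ensured by the symmetric factor $(2s-1)$ in the definition of $g$.

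First, I would set $\Gamma_\R(s):=\pi^{-s/2}\Gamma(s/2)$, $\Gamma_\C(s):=(2\pi)^{-s}\Gamma(s)$, and consider the completed zeta function $\Lambda_k(s):=|D_k|^{s/2}\Gamma_\R(s)^{r_1}\Gamma_\C(s)^{r_2}\zeta_k(s)$, which satisfies $\Lambda_k(s)=\Lambda_k(1-s)$ and has simple poles at $s=0,1$ with $\mathrm{Res}_{s=1}\Lambda_k(s)=2^{r_1}h_kR_k/w_k$. Since $(\pi^n4^{r_2})^{-s/2}\Gamma(s/2)^{r_1}\Gamma(s)^{r_2}=\Gamma_\R(s)^{r_1}\Gamma_\C(s)^{r_2}$, the Dirichlet series for $\zeta_k$ and term-by-term Mellin inversion give, for any $d>0$,
$$\sum_{\aa\ne 0}g(N(\aa)^2/d)=\frac{1}{2^{r_1}\cdot 4\pi i}\int_{(2)}(d/|D_k|)^{s/2}(2s-1)\Lambda_k(s)\,ds,$$
the sum running over non-zero integral ideals of $k$.

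Second, I would shift the contour across the simple pole at $s=1$ down to the critical line $\re s=1/2$. The residue contribution equals $\sqrt{d/|D_k|}\,h_kR_k/(2w_k)$. The central task is to show that the remaining integral on $\re s=1/2$ is non-positive: the factor $(2s-1)$ is placed precisely so that folding $s\leftrightarrow 1-s$ via $\Lambda_k(s)=\Lambda_k(1-s)$ turns this integral into an expression of the shape $-\sum_\rho|\Phi(\rho,d)|^2$, with $\rho$ ranging over the non-trivial zeros of $\zeta_k$ and $\Phi$ a Mellin partner of $g$. Combined with the Mellin identity this gives
$$\sqrt{d/|D_k|}\,\frac{h_kR_k}{2w_k}\ \ge\ \sum_{\aa\ne 0}g(N(\aa)^2/d).$$

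Third, I would restrict the sum on the right-hand side to the principal rational-integer ideals $\aa=(j)$ with $j=1,\dots,N$, which contribute $g(j^{2n}/d)$. This weakening of the sum preserves the inequality provided the discarded terms are non-negative; since every other non-zero integral ideal has $N(\aa)\ge 2$, its argument is $\ge 4/d\ge 4/d_3$, and the hypothesis $g(4/d_3)\ge 0$ together with a direct analysis of the Mellin contour for $g$ (showing $g$ remains non-negative once it has turned positive) guarantees this. Applying the argument once with $d=d_1$ and once with $d=d_2$, and taking term-by-term whichever of the two values of $g(j^{2n}/d)$ is smaller, yields $h_kR_k/w_k\ge G(d_1,d_2,N)$; since $k$ has a real place we have $w_k=2$, and always $h_k\ge 1$, whence $R_k\ge 2G$.

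The main obstacle is verifying the sign of the critical-line integral in the second step. The factor $(2s-1)$ combined with the functional equation folds the integrand into a manifestly non-positive quadratic form in $\Lambda_k(1/2+it)$, but making this rigorous requires either a direct Fourier-analytic calculation or an appeal to the Weil--Guinand explicit formula. The improvement to $R_k\ge 4G$ when the different ideal class is trivial reflects an additional $\Z/2\Z$ symmetry of the theta series associated to $k$: in that case the Mellin representation averages over a two-element coset, doubling the residue contribution.
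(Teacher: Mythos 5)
You should first be aware that the paper does not prove this lemma at all: it is quoted from [ADF, Lemmas 4 and 5], whose proof rests on Friedman's exact analytic formula for the regulator (Invent.\ Math.\ \textbf{98} (1989)), so the comparison is with that argument. Your proposal does not establish the statement, and the gap is precisely the difficulty this machinery was built to circumvent: the class number. Shifting the contour for the full Dedekind zeta function picks up the residue of the completed $\zeta_k$ at $s=1$, which is proportional to $h_kR_k/w_k$, and your closing inference ``$w_k=2$, $h_k\ge1$, whence $R_k\ge2G$'' runs the wrong way: from $h_kR_k\ge 2G$ and $h_k\ge 1$ one only gets $R_k\ge 2G/h_k$, which is weaker than the claim and useless here, since the fields in the relevant discriminant range need not have class number one. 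This cannot be repaired within your setup, because $h_k$ genuinely sits in the residue of $\zeta_k$. The cited proof avoids $h_k$ altogether by applying the transform to partial zeta functions $\zeta(s,C)$ of single ideal classes $C$ whose square is the class of the different $\mathfrak{d}$ (such classes exist by Hecke's theorem that $[\mathfrak d]$ is a square): Hecke's functional equation maps such a class to itself, and the residue of $\zeta(s,C)$ at $s=1$ equals $2^{r_1}(2\pi)^{r_2}R_k/(w_k\sqrt{|D_k|})$, independent of the class and free of $h_k$. This is also where the hypothesis on the different genuinely enters: it makes the principal class admissible, so that the ideals $(j)$, $1\le j\le N$, actually occur in the sum (whence the constant $4$ instead of $2$); it is not an ``extra $\Z/2\Z$ symmetry doubling the residue.''

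The sign claim at the heart of your second step is also asserted rather than proved, and it is not the true mechanism. On $\re s=\tfrac12$ the factor $2s-1=2it$ is purely imaginary and $\Lambda_k(\tfrac12+it)$ is real but oscillates in sign, so the integral is in no visible way a non-positive quadratic form, and writing it as $-\sum_\rho|\Phi(\rho,d)|^2$ over nontrivial zeros has no justification as stated. In the actual argument $(2s-1)$ plays a different role: it is odd under $s\mapsto 1-s$, so for the self-dual data above (and with the argument taken at the true discriminant, \ie $x=N\mathfrak{a}^2/|D_k|$) the critical-line integral cancels identically, while the residues at $s=1$ and $s=0$, where $2s-1$ equals $+1$ and $-1$, reinforce each other; one obtains an exact identity expressing a constant multiple of $R_k/w_k$ as an ideal sum. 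Inequalities enter only afterwards, when terms are discarded and the unknown $|D_k|\in[d_1,d_2]$ is replaced by the endpoints, and that step requires the analytic properties of $g$ you defer to ``a direct analysis of the Mellin contour'': that $g$ is negative near $0$ and changes sign once, so $g(4/d_3)\ge0$ makes every discarded term (ideals of norm at least $2$) non-negative, and that $g(j^{2n}/|D_k|)\ge\min\big(g(j^{2n}/d_1),g(j^{2n}/d_2)\big)$. These facts are the content of [ADF, Lemma 5] and must be proved, not assumed.
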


We  now prove    the Theorem in \S1. So assume $(r_1,r_2)=(5,1)$ and $R_k\le 3.2$. We shall first  show that $|D_k| < 3\,030\,000 $.
Since $R_k \le 3.2$, Lemma \ref{Discbound} shows that  $|D_k|\le\e^{31.492}$. We deal separately with various subintervals of $[3\,030\,000 ,\e^{31.492}]$, always taking $d_3=\e^{31.492}$ in Lemma \ref{Analy}, noting that $g(4/\e^{31.492} )=8.5631...>0$. If  $|D_k|\le\e^{20}$, then the ideal class of the different of $k$ is
trivial \cite[Table 2]{ADF}. A calculation  shows that  $R_k\ge4G(  3\,030\,000,\e^{20},1)= 3.23...>3.2$. Hence this
range of discriminant is ruled out by   Lemma \ref{Analy}. We subdivide the remaining interval $[\e^{20},\e^{31.492}]$ into four subintervals and calculate 2$G$ for them.
\begin{align*}
&2G(\e^{31.4},\e^{31.492},3) =3.511...,\qquad &2G(\e^{31},\e^{31.4},3)=4.195...,\\
 &2G(\e^{28},\e^{31},3) =3.257...,\qquad  &2G(\e^{20},\e^{28},3) =13.295...\, .
\end{align*}
Thus,  Lemma  \ref{Analy}   rules out discriminants in the interval $[\e^{20},\e^{31.492}]$, and so $|D_k|< 3\,030\,000$. We conclude with  Table 2,  listing  $R_k$ for all fields $k$ with  $|D_k|< 3\,030\,000$  \cite{DyD}.

\begin{table}[h]\caption{All fields of degree 7 having  5 real places and $|$discriminant$|<  3\,030\,000.$}

\centering
\begin{tabular}{ccc}
Discriminant & Polynomial &  Regulator \\
\hline\hline
$-2\,306\,599^{\phantom{\Sigma^\Sigma}} $& $x^7-3x^5-x^4+x^3+3x^2+x-1$ & 2.88465  \\ \hline
$-2\,369\,207 ^{\phantom{\Sigma^\Sigma}} $& $x^7-x^5-5x^4-x^3+5x^2+x-1$ & 2.93325 \\ \hline
$-2\,616\,839^{\phantom{\Sigma^\Sigma}} $ & $x^7-x^6-5x^5-x^4+4x^3+3x^2-x-1$ & 3.13684 \\ \hline
$-2\,790\,047^{\phantom{\Sigma^\Sigma}} $ & $x^7+x^6-2x^5-3x^4-2x^3+3x^2+4x-1$ & 3.26802 \\ \hline
$-2\,790\,551^{\phantom{\Sigma^\Sigma}} $ & $x^7-5x^5-x^4+7x^3+3x^2-3x-1$ & 3.27113 \\ \hline
$-2\,894\,039^{\phantom{\Sigma^\Sigma}} $ & $x^7-4x^5-2x^4+4x^3+4x^2-x-1$ & 3.34402 \\ \hline
$-2\,932\,823^{\phantom{\Sigma^\Sigma}} $ & $x^7-x^6-4x^3+2x^2+2x-1$ & 3.36846\\ \hline\hline
\end{tabular}
\end{table}

\end{document}